\documentclass[10pt]{article}
\usepackage{amsmath, amsthm}\usepackage{enumerate}\usepackage{amssymb}\usepackage{bbold}\usepackage{color}
\usepackage{bbm}\usepackage{dsfont}\usepackage{color}\usepackage{xcolor}
\newtheorem{theorem}{Theorem}[section]
\newtheorem{proposition}[theorem]{Proposition}

\newtheorem{example}[theorem]{Example}
\newtheorem{corollary}[theorem]{Corollary}
\newtheorem{remark}[theorem]{Remark}

\newtheorem{definition}[theorem]{Definition}

\DeclareMathOperator{\dc}{\xrightarrow[]{D_{st_o}(p,q)}}

\DeclareMathOperator{\sd}{\downarrow^{st_o}}

\DeclareMathOperator{\soc}{\xrightarrow[]{st_o}}

\DeclareMathOperator{\co}{\xrightarrow[]{o}}

\begin{document}

\title{Deferred statistical order convergence in Riesz spaces}
\maketitle\author{\centering{{Mehmet Küçükaslan$^1$ and Abdullah Ayd\i n$^{2,*}$\vspace{2mm} \\
			\small $^1$Department of Mathematics, Mersin University, Mersin, Turkey \\  \small mkkaslan@gmail.com\\
			\small $^2$Department of Mathematics, Mu\c{s} Alparslan University, Mu\c{s}, Turkey \\  \small a.aydin@alparslan.edu.tr\\$^*$Corresponding Author

\abstract
Some types of statistical convergence such as statistical order and deferred statistical convergences have been studied and investigated in Riesz spaces, recently. In this paper, we introduce the concept of deferred statistical convergence in Riesz spaces with order convergence. Moreover, we give some relations between deferred statistical order convergence and other kinds of statistical convergences.
\endabstract

\textbf{Keywords}: {deferred statistical convergence, order convergence, deferred statistical order convergence, Riesz space}

\section{Introduction and Preliminaries}\label{sec:section1}

Statistical convergence is a generalization of the ordinary convergence of a real or complex sequence. It was introduced by Steinhaus in \cite{St}. Maddox discussed the statistical convergence in more general abstract spaces such as locally convex spaces in \cite{M}. K\"{u}\c{c}\"{u}kaslan and Y\i lmazt\"{u}rk introduced and investigated the deferred statistical convergence in \cite{KY}. It is enough to mention the theory of statistical convergence (cf. \cite{Agnew,Fast,Fridy,KDD,M}). On the other hand, Riesz space (or, vector lattice) is another concept of functional analysis that was introduced by Riesz \cite{Riez}. Then, many authors developed the subject. Riesz space is an ordered vector space that has many applications in measure theory, Banach space, operator theory, and applications in economics (cf. \cite{AB,ABPO,AAydn2,LZ,Za}). The present paper aims to combine the concepts of deferred statistical convergence of real sequences and order convergence in Riesz spaces.

A real-valued vector space $E$ with an order relation is said to be {\em ordered vector space} if, for each $x,y\in E$ with $x\leq y$, we have $x+z\leq y+z$ and $\alpha x\leq\alpha y$ for all $z\in E$ and $\alpha \in \mathbb{R}_+$. An ordered vector space $E$ is called {\em Riesz space} or {\em vector lattice} if, for any two vectors $x,y\in E$, the infimum and the supremum
$$
x\wedge y=\inf\{x,y\} \ \ \text{and} \ \ x\vee y=\sup\{x,y\}
$$
exist in $E$, respectively. For an element $x$ in a vector lattice $E$, {\em the positive part}, {\em the negative part}, and {\em module} of $x$ are respectively defined as follows:
$$
x^+:=x\vee0, \ \ \ x^-:=(-x)\vee0\ \ and \ \ |x|:=x\vee (-x).
$$ 
Thus, in the present paper, the vertical bar $|\cdot|$ of elements in vector lattices will stand for the module of the given elements. A subset $A$ of a vector lattice $E$ is called \textit{solid} if, for each $x\in A$ and $y\in E$,  $|y|\leq|x|$ implies $y\in A$. A solid vector subspace of a vector lattice is referred to as an \textit{ideal}. A vector lattice is called {\em $\sigma$-order complete} if every nonempty bounded above countable subset has a supremum (or, equivalently, whenever every nonempty bounded below countable subset has an infimum) (cf. \cite{AB}).

A sequence $(x_n)$ in a Riesz space $E$ is said to be increasing whenever $x_1 \leq x_2\leq\cdots$ and is decreasing if $x_1 \geq x_2\geq \cdots$ holds. Then, we denote them by $x_n\uparrow$ and $x_n\downarrow$, respectively. Moreover, if $x_n\uparrow$ and $\sup x_n=x$, then we write $x_n\uparrow x$. Similarly, if $x_n\downarrow$ and $\inf x_n=x$, then we write $x_n\downarrow x$. Then, we call that $(x_n)$ is increasing or decreasing as monotonic. On the other hand, order convergence is crucial for this paper, and so, we continue with its definition.
\begin{definition}\label{def of order and un-order conv}
Let $(x_n)$ be a sequence in a vector lattice $E$. Then, it is called {\em order convergent} to $x\in E$ if there exists another sequence $y_n\downarrow 0$ (i.e., $\inf y_n=0$ and $y_n\downarrow$) such that $|x_n-x|\le y_n$ holds for all $n\in \mathbb{N}$, and abbreviated it as $x_n\co x$.
\end{definition}

For the definition of statistical convergence, the important point is the natural density of subsets of natural numbers. Recall that the \emph{density of} a subset $K$ of $\mathbb{N}$ is the limit $\lim_{n\rightarrow \infty}\frac{1}{n}\left\vert\left\{k\leq n:k\in K\right\}\right\vert$  whenever this unique limit exists, and it is mostly abbreviated by $\delta(K)$, where $\left\vert\left\{k\leq n:k\in K\right\}\right\vert$ is the cardinality of $K$, and it does not exceed $n$. A sequence $(x_n)$ of real numbers is called {\em statistical convergent} to a real number $l$ if, for every $\varepsilon>0$, we have 
$$
\lim\limits_{n\to\infty}\frac{1}{n}\big\lvert\{k:n\geq k, \ |x_n-l|>\varepsilon\}\big\rvert=0.
$$

Now, consider a sequence $x:=(x_k)$ and take the sequences $(p_n)$ and $(q_n)$ of non-negative integers such that $p_n<q_n$ for each $n$ and $q_n$ divergent to infinity. Then, define a new sequence
$$
(D_{p,q}x)_n:=\frac{1}{q_n-p_n}\sum_{k=p_n+1}^{q_n}x_k,
$$
for each $n\in\mathbb{N}$. The sequence $(D_{p,q}x)_n$ is called the {\em deferred Ces\'{a}ro mean} as a generalization of Ces\'{a}ro mean of real (or complex) valued sequence; see \cite{Agnew}. On the other hand, $x$ is said to be {\em strong $D_{p,q}$-convergent} to $l$ if the following limit exists
$$
\lim\limits_{n\to\infty}\frac{1}{q_n-p_n}\sum_{k=p_n+1}^{q_n}|x_k-l|=0.
$$
Then, we abbreviate it as $x_k\xrightarrow{D[p,q]}l$. In this article, unless otherwise, when we mention $p$ and $q$ sequences, they always hold the above properties, and also, these properties are said to be {\em the deferred property}.

A sequence $x:=(x_k)$ is called {\em deferred statistical convergent} to $l\in\mathbb{R}$ whenever, for all $\varepsilon>0$, we have
$$
\lim\limits_{n\to\infty}\frac{1}{q_n-p_n}\big|\{p_n<k\leq q_n:|x_k-l|\geq\varepsilon\}\big|=0
$$
holds; see \cite{KY}. In this case, we write $x_k\xrightarrow{DS[p,q]}l$. 

A characterization of statistical convergence on vector lattices was introduced by Ercan in \cite{ZE}, and also, some kinds of statistical convergence in Riesz spaces were introduced and studied by Ayd\i n in \cite{AAydn1,AAydn3,AAydn4,AE}.
\begin{definition}
Let $(x_n)$ be a sequence in a Riesz space $E$. Then, $(x_n)$ is called
\begin{enumerate}
\item[-] {\em statistical order decreasing} to $0$ if there exists a set $K=\{k_1<k_2<\cdots\}\subset\mathbb{N}$ with $\delta(K)=1$ such that $(x_{k_n})$ is decreasing and $\inf\limits_{n\in K}(x_{k_n})=0$, i.e., $(x_{k_n})_{k_n\in K}\downarrow 0$, and it is abbreviated as $x_n\sd 0$;
	
\item[-] {\em statistical order convergent} to $x\in E$ if there exists a sequence $q_n\sd0$ with an index set $K=\{k_1<k_2<\cdots\}\subset\mathbb{N}$ such that $\delta(K)=1$ and 
$$
|x_{k_n}-x|\leq q_{k_n}
$$
for every $k_n\in K$, and so, we write $x_n\soc x$.
\end{enumerate}
\end{definition}
It is clear that every order convergent sequence is statistical order convergent to the same point.
\section{Deferred statistical decreasing}
Tripathy \cite{Trip} introduced the statistical monotonicity for real sequences, and also, statistically monotone sequences in Riesz spaces were investigated. We extend it to deferred statistical decreasing in Riesz spaces.
\begin{definition}\label{first def.}
Let $(p_n)$ and $(q_n)$ be sequences of nonnegative integers satisfying the deferred property. Then, a sequence $(z_n)$ in a Riesz space $E$ is called {\em deferred statistical order decreasing} to $0$ if there exists a set $K\subseteq\mathbb{N}$ such that the deferred density of $K$
$$
\delta_{p,q}(K):=\lim\limits_{n\to\infty}\frac{1}{q_n-p_n}\big|\{p_n< k\leq q_n:k\in K\}\big|=1
$$
and $(z_{k_n})_{k_n\in K}\downarrow0$ holds on $K$. Then, we abbreviate it as $z_n\downarrow^{D_{st_o}}_{p,q}0$.
\end{definition}

\begin{remark}\label{remark1} \
\begin{enumerate}[(i)]
\item If $q(n)=n$ and $p(n)=0$, then Definition \ref{first def.} coincides with the definition of statistical order decreasing.

\item If $(z_n)$ is monotone decreasing to zero, then it is deferred statistical order decreasing to zero. But, the converse does not need to be true in general. To see this, consider the Euclidean space $\mathbb{R}^2$ with the coordinatewise ordering and the sequences $q(n)=n$ and $p(n)=0$ and $(z_n)$ denoted by
$$
z_n:=
\begin{cases} 
(0,n^2) & \text{if} \ n=k^3 \\
(0,\frac{1}{n^2}) & \text{if} \ n\neq k^3
\end{cases},
$$
where $k\in\mathbb{N}$. Hence, we get $z_n\downarrow^{D_{st_o}}_{p,q} (0,0)$. But, observe that the whole sequence $(z_n)$ is not monotonic.

\item A deferred statistical order decreasing to zero sequence may contain a subsequence of decreasing or incomparable elements of $E$ but the index set of such a subsequence has deferred density zero.

\item In Riesz spaces, it is well known that $z_n\downarrow0$ implies $z_{k_n}\downarrow 0$ for every subsequence $(z_{k_n})$ of $(z_n)$. However, this may
not hold in the setting of deferred statistical monotone decreasing sequences. For example, take the sequences in the $(ii)$ with the subsequence $(z_{k_n})$, where $k_n=j^3$ for some $j\in\mathbb{N}$, does not have a supremum.
\end{enumerate}
\end{remark}

In the general case, the example in Remark \ref{remark1}$(iv)$ shows that a subsequence of deferred statistical monotone decreasing sequence need not be deferred statistical monotone decreasing. But, we give a positive result in the following theorem.
\begin{theorem}
Let $(z_n)$ be a sequence in a Riesz space $E$. If $z_n\downarrow^{D_{st_o}}_{p,q}0$, then any subsequence $(z_{k_n})$ of $(z_n)$ with index set $\delta_{p,q}(K)=1$ such that $(z_{k_n})$ is decreasing on $K$ is deferred statistical order decreasing to $0$.
\end{theorem}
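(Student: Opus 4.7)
The plan is to exploit the witness set coming from the hypothesis and intersect it with $K$, then show that the subsequence is both nonnegative and has infimum zero, so that $(z_{k_n})$ in fact decreases monotonically to $0$. From $z_n\downarrow^{D_{st_o}}_{p,q}0$ I extract a set $K_0\subseteq\mathbb{N}$ with $\delta_{p,q}(K_0)=1$ and $(z_n)_{n\in K_0}\downarrow 0$; because a decreasing sequence with infimum $0$ in a Riesz space has nonnegative terms, $z_n\ge 0$ for every $n\in K_0$.

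Setting $K':=K\cap K_0$, I first verify $\delta_{p,q}(K')=1$ by the complement bound
\[
\frac{|(K')^c\cap(p_n,q_n]|}{q_n-p_n}\le\frac{|K^c\cap(p_n,q_n]|}{q_n-p_n}+\frac{|K_0^c\cap(p_n,q_n]|}{q_n-p_n},
\]
whose right-hand side tends to $0$ by the two density hypotheses; in particular $K'$ is infinite. Writing $K=\{k_1<k_2<\cdots\}$, for any fixed $n$ I pick $j>n$ with $k_j\in K'$, and the assumed monotonicity on $K$ together with $k_j\in K_0$ yields $z_{k_n}\ge z_{k_j}\ge 0$, so every term of the subsequence is nonnegative. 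At the same time, $(z_{k_j})_{k_j\in K'}$ is an infinite decreasing sub-collection of $(z_n)_{n\in K_0}$ whose indices are cofinal in $K_0$; hence any lower bound for it is necessarily a lower bound for the whole $K_0$-family, forcing its infimum to equal $\inf_{n\in K_0}z_n=0$. Combining these two observations gives $\inf_n z_{k_n}=0$, and together with the assumed monotonicity this shows $(z_{k_n})\downarrow 0$ as a genuine sequence, which trivially yields $(z_{k_n})\downarrow^{D_{st_o}}_{p,q}0$ with witness $M=\mathbb{N}$.

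The main obstacle I anticipate is the cofinality step inside the Riesz space: promoting $\inf_{n\in K_0}z_n=0$ to $\inf_{k_j\in K'}z_{k_j}=0$ requires using the decreasing structure on $K_0$ to upgrade a lower bound on the sub-sub-collection to a lower bound on the whole $K_0$-family, and this is the one point where the Riesz-space order must be handled with care rather than appealing to a metric notion. The subadditivity of the deferred density and the propagation of nonnegativity along the decreasing chain are routine bookkeeping by comparison.
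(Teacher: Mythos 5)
Your proof is correct and takes essentially the same route as the paper's: intersect the witness set from the hypothesis with $K$ (the paper's $M$), use the common indices together with the assumed monotonicity on $K$ to get nonnegativity of the subsequence, and upgrade any lower bound of the subsequence to a lower bound of the witness family so that its infimum must be $0$. Your density subadditivity and cofinality steps are just cleaner formulations of the paper's ``infinitely many common indices'' and ``every subsequence of a decreasing-to-zero sequence has infimum zero'' arguments.
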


\begin{proof}
Suppose that $z_n\downarrow^{D_{st_o}}_{p,q}0$ holds in $E$. Then, there exists a set $K\subset\mathbb{N}$ such that $\delta_{p,q}(K)=1$ and $(z_{k_n})_{k_n\in K}\downarrow0$ on $K$. Let us consider any arbitrary index set $M\subseteq\mathbb{N}$ such that $K\neq M$, $\delta_{p,q}(M)=1$ and $(z_n)$ is decreasing on $M$. It can be observed that if there is not such a set $M$, then the poof is complete. It follows from $z_{k_n}\downarrow0$ that $0\leq z_{k_n}$ for all $k_n\in K$. Also, we have $\delta_{p,q}(K\cap M)=1$. Thus, for some $k_m\in K$ and $m_n\in M$, we have $k_n=m_n$. Hence, we have $z_{m_1}\geq z_{m_2}\geq\cdots\geq z_{m_n}=z_{k_n}\geq 0$. We can find infinitely many of such pair of indices. By continuing this way, we obtain $z_{m_n}\geq 0$ for every $m_n\in M$, i.e., zero is a lower bound of $(z_{m_n})$. Take another lower bound $u$ of $(z_{m_n})$. Therefore, we have $u\leq z_{m_n}$ for every $m_n\in M$. Then, we can find some $z_{n_{k_t}}$ such that $z_{n_{k_t}}=z_{m_k}\geq u$ for some $m_k\in M$. By following this way, we can construct a subsequence $(z_{n_{k_1}},z_{n_{k_2}},\cdots)$ of $(z_{k_n})$ such that $u$ is a lower bound of $(z_{n_{k_t}})$ for $t\in \mathbb{N}$. It follows from $z_{k_n}\downarrow 0$ that the infimum of every subsequence of $(z_{k_n})$ is zero. Hence, we get
$u=0$. Therefore, we get the desired result, $z_{m_n}\downarrow^{D_{st_o}}_{p,q}0$.
\end{proof}

In the next results without proof, we give the linear property of deferred statistical order decreasing sequences.
\begin{proposition}\label{uo-conv implies stuo-conv.}
Let $x_n\downarrow^{D_{st_o}}_{p,q}0$ and $y_n\downarrow^{D_{st_o}}_{p,q}0$ be a sequence in a Riesz space $E$ and $\lambda\in\mathbb{R}$. Then, we have
\begin{enumerate}
\item[(i)] $(x_n+y_n)\downarrow^{D_{st_o}}_{p,q}0$;
\item[(ii)] $\lambda x_n\downarrow^{D_{st_o}}_{p,q}0$.
\end{enumerate}
\end{proposition}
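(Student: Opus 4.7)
The plan is to reduce everything to the monotone case by intersecting the index sets that witness the deferred statistical decrease of $(x_n)$ and $(y_n)$.

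For part (i), let $K_1,K_2\subseteq\mathbb{N}$ be the index sets provided by the hypotheses, so that $\delta_{p,q}(K_j)=1$ for $j=1,2$, $(x_{k_n})_{k_n\in K_1}\downarrow 0$, and $(y_{k_n})_{k_n\in K_2}\downarrow 0$. I would set $K:=K_1\cap K_2$ and first verify $\delta_{p,q}(K)=1$ by a subadditivity argument: for each $n$, the window $\{p_n<k\leq q_n\}\cap K^c$ is contained in its intersection with $K_1^c\cup K_2^c$, so dividing by $q_n-p_n$ and letting $n\to\infty$ forces $\delta_{p,q}(K^c)=0$. On $K$ both $(x_{k_n})$ and $(y_{k_n})$ are decreasing (as subsequences of sequences that are decreasing on $K_1$, resp.\ $K_2$), and each has infimum $0$ on $K$ by the standard fact that a decreasing subsequence of a decreasing-to-$0$ sequence in a Riesz space still has infimum $0$. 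In particular $x_{k_n},y_{k_n}\geq 0$ for every $k_n\in K$, so $(x_{k_n}+y_{k_n})_{k_n\in K}$ is clearly decreasing, and I would check that its infimum is $0$ by the usual lattice argument: if $v\leq x_{k_n}+y_{k_n}$ for every $k_n\in K$, then for a fixed $k_m\in K$ and any $k_n\geq k_m$ in $K$ we have $v-x_{k_m}\leq y_{k_n}$, so $v-x_{k_m}\leq\inf_{k_n\geq k_m,\,k_n\in K}y_{k_n}=0$, whence $v\leq x_{k_m}$ for every $k_m\in K$; a second infimum gives $v\leq 0$, and combined with $v\geq 0$ this yields the desired equality.

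For part (ii), in the case $\lambda\geq 0$ I would re-use the index set $K$ witnessing $x_n\downarrow^{D_{st_o}}_{p,q}0$: the sequence $(\lambda x_{k_n})_{k_n\in K}$ stays decreasing under scaling by a nonnegative real, and $\inf_{k_n\in K}\lambda x_{k_n}=\lambda\inf_{k_n\in K}x_{k_n}=0$. For $\lambda<0$ the literal condition $\lambda x_n\downarrow 0$ fails since the elements become nonpositive, so I would read the statement under the convention that the conclusion is $|\lambda|x_n\downarrow^{D_{st_o}}_{p,q}0$ (which follows from the nonnegative case); otherwise the proposition is naturally restricted to $\lambda\geq 0$.

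The main obstacle is the infimum computation in part (i). The density bookkeeping is routine, but since Definition \ref{first def.} requires a genuine lattice infimum equal to $0$ on $K$ (not merely an asymptotic smallness condition), one must transport the relations $\inf x=0$ and $\inf y=0$ through the sum using the two-step lower-bound argument above. Everything else—preservation of decrease-to-zero under passing to decreasing subsequences, and the density of an intersection of two full-deferred-density sets being full—is standard and follows the same template one uses in the ordinary statistical setting.
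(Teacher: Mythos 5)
The paper states this proposition explicitly \emph{without proof} (``In the next results without proof, we give the linear property\dots''), so there is no argument of the authors to compare against; your proof supplies the missing details correctly and in the way one would expect: intersect the two witnessing index sets (deferred density $1$ by subadditivity over each window $(p_n,q_n]$), note that the restrictions remain decreasing with infimum $0$, and then run the two-step lower-bound argument to get $\inf(x_{k_n}+y_{k_n})=0$, with the scaling case for $\lambda\geq 0$ being immediate. Your caveat about $\lambda<0$ is also warranted, since for negative scalars the sequence $(\lambda x_{k_n})$ is increasing and the statement can only be read as $|\lambda|x_n\downarrow^{D_{st_o}}_{p,q}0$ or with $\lambda\geq 0$. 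One small wording fix in (i): an arbitrary lower bound $v$ of $(x_{k_n}+y_{k_n})$ need not satisfy $v\geq 0$; what your argument actually uses (and already establishes) is that $0$ is a lower bound of the sum and that every lower bound $v$ satisfies $v\leq 0$, which together give $\inf_{k_n\in K}(x_{k_n}+y_{k_n})=0$.
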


\section{Deferred statistical order convergence}

\begin{definition}\label{main def}
Let $p$ and $q$ be sequences of positive integers satisfying the deferred property. Then, a sequence $(x_n)$ in a Riesz space $E$ is called {\em deferred statistical order convergent} to $x$ if there exists a sequence $z_n\downarrow^{D_{st_o}}0$ with an index set $K\subseteq\mathbb{N}$ such that $\delta_{p,q}(K)=1$ and
$$
|x_{k_n}-x|\leq z_{k_n}
$$
holds for all $k_n\in K$. Then, we write $x_n\dc x$.
\end{definition}

\begin{remark}
It can be seen that, in the case of $x_n\dc x$, we have
$$
\delta_{p,q}(\{n\in\mathbb{N}:|x_n-x|\nleq z_n\})=0.
$$
\end{remark}

\begin{remark}
It can be observed that the deferred statistical order convergence of the sequence $(x_n)$ in Definition \ref{main def} with sequence $(z_n)$ to $x$ implies that $x_{k_n}\dc x$ with the same sequence $(z_n)$. The converse is also true, i.e., if there exists a subsequence $(x_{k_n})\dc x$ of a sequence $(x_n)$ with a sequence $z_n\downarrow^{D_{st_o}}0$, then $x_n\dc x$ with the same sequence $(z_n)$.
\end{remark}

It is clear that deferred statistical order decreasing sequence is deferred statistical order convergent. But, the converse does not hold in general.

\begin{remark}\label{order and deferred}
Let $q(n)=n$ and $p(n)=0$. Then, we have the following observations:
\begin{enumerate}
\item[(i)] an order convergent sequence is deferred statistical order convergent to its order limit;

\item[(ii)] the statistical order convergence and deferred statistical order convergence coincide;
\end{enumerate}
\end{remark}

One can observe that a subsequence of a deferred statistical order convergent sequence need not be deferred statistical order convergent.

\begin{proposition}
Let $(x_n)$ be a sequence in a Riesz space $E$. Then, $x_n\dc x$ holds if and only if there exists another sequence $(y_n)$ in $E$ such that $\delta_{p,q}(\{n\in\mathbb{N}:x_n=y_n\})=1$ and $y_n\dc x$.
\end{proposition}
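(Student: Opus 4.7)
The plan is to prove each direction by a direct construction, exploiting the fact that deferred statistical order convergence depends only on the values of a sequence on a deferred density-one subset of indices.

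For the forward direction, given $x_n \dc x$ witnessed by some $z_n \downarrow^{D_{st_o}}_{p,q} 0$ and index set $K$ with $\delta_{p,q}(K)=1$ and $|x_{k_n}-x|\le z_{k_n}$ for all $k_n\in K$, I would define
$$
y_n := \begin{cases} x_n, & n \in K, \\ x, & n \notin K. \end{cases}
$$
Then $\{n\in\mathbb{N}:x_n=y_n\}\supseteq K$, so this set has deferred density $1$, and the same pair $(z_n),K$ witnesses $y_n\dc x$, since $|y_{k_n}-x|=|x_{k_n}-x|\le z_{k_n}$ for every $k_n\in K$.

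For the reverse direction, suppose $y_n \dc x$ with witness $(z_n)$ and index set $K_1$, and set $K_2:=\{n:x_n=y_n\}$, so that $\delta_{p,q}(K_2)=1$. The central step is to show $K:=K_1\cap K_2$ still has deferred density $1$. Since $(K_1\cap K_2)^c=K_1^c\cup K_2^c$, the counting inequality
$$
\big|(K_1^c\cup K_2^c)\cap(p_n,q_n]\big| \le \big|K_1^c\cap(p_n,q_n]\big| + \big|K_2^c\cap(p_n,q_n]\big|
$$
gives, upon dividing by $q_n-p_n$ and letting $n\to\infty$, that $\delta_{p,q}(K^c)=0$. On $K$ both $x_n=y_n$ and $|y_n-x|\le z_n$ hold, hence $|x_n-x|\le z_n$ throughout $K$, and therefore $x_n\dc x$ with witness $(z_n)$ and index set $K$.

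There is no serious obstacle in this argument; the one technical point worth highlighting is the finite subadditivity of the deferred density on complements, which guarantees that the intersection of two deferred density-one sets is itself deferred density-one, and this is precisely what makes the equivalence go through in both directions.
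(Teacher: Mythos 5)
Your proposal is correct and, in the substantive direction, follows essentially the same route as the paper: the paper bounds the exceptional set $\{p_n<m\leq q_n:|x_m-x|\nleq z_m\}$ by the union of the disagreement set and the exceptional set for $(y_n)$, which is exactly the complement form of your intersection-of-two-deferred-density-one-sets argument with the same witness $(z_n)$. Your explicit construction of $(y_n)$ in the forward direction (the paper dismisses that direction as obvious) is a harmless extra.
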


\begin{proof}
Suppose that there exists a sequence $(y_n)$ in $E$ such that $\delta_{p,q}(\{n\in\mathbb{N}:x_n=y_n\})=1$ and $y_n\dc x$. Then, there is another sequence $z_n\downarrow^{D_{st_o}}0$ in $E$ with $\delta_{p,q}(K)=1$ such that
$|x_{k_n}-x|\leq z_{k_n}$ for each $k_n\in K$. Thus, it follows from the including 
$$
\{p_n+1\leq m\leq q_n:|x_m-x|\nleq z_m\}\subseteq \{p_n+1\leq m\leq q_n:x_m\neq y_m\}
$$
$$
\cup \{p_n+1\leq m\leq q_n:|y_m-x|\nleq z_m\}
$$ 
that we have 
$$\lim\limits_{n\to\infty}\frac{1}{q_n-p_n}|\{p_n+1\leq m\leq q_n:|x_m-x|\nleq z_m\}|
$$
$$
\leq \lim\limits_{n\to\infty}\frac{1}{q_n-p_n}|\{p_n+1\leq m\leq q_n:x_m\neq y_m\}|
$$
because of $\delta_{p,q}(\{p_n+1\leq m\leq q_n:|y_m-x|\nleq z_m\})=0$. Thus, we obtain 
$$
\lim\limits_{n\to\infty}\frac{1}{q_n-p_n}|\{p_n+1\leq m\leq q_n:|x_m-x|\nleq z_m\}|=0.
$$
Therefore, we get the desired result, $x_n\dc x$. The other part of proof is obvious, and so, we omit it.
\end{proof}

\begin{proposition}\label{linear and unit}
The deferred statistical order limit is linear and uniquely determined.
\end{proposition}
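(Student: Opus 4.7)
The plan is to handle linearity first, then uniqueness, using Proposition 2.3 as the main lever and the triangle inequality $|a+b|\leq|a|+|b|$ valid in any Riesz space.

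\textbf{Linearity.} Suppose $x_n\dc x$ and $y_n\dc y$ with witnesses $z_n\downarrow^{D_{st_o}}_{p,q}0$ on an index set $K_1$ of deferred density one, and $w_n\downarrow^{D_{st_o}}_{p,q}0$ on $K_2$ of deferred density one. I will pass to $K:=K_1\cap K_2$, which still has deferred density one (because $\delta_{p,q}(K_1^c\cup K_2^c)\leq \delta_{p,q}(K_1^c)+\delta_{p,q}(K_2^c)=0$). On $K$ the triangle inequality gives
$$
\bigl|(x_{k_n}+y_{k_n})-(x+y)\bigr|\leq|x_{k_n}-x|+|y_{k_n}-y|\leq z_{k_n}+w_{k_n},
$$
and Proposition 2.3(i) tells us $(z_n+w_n)\downarrow^{D_{st_o}}_{p,q}0$, proving $x_n+y_n\dc x+y$. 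For scalars, $|\lambda x_{k_n}-\lambda x|=|\lambda|\,|x_{k_n}-x|\leq|\lambda|z_{k_n}$ together with Proposition 2.3(ii) gives $\lambda x_n\dc\lambda x$. Combining the two yields full $\mathbb{R}$-linearity.

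\textbf{Uniqueness.} Suppose $x_n\dc x$ and $x_n\dc y$ with respective witnesses $z_n^{(1)},z_n^{(2)}\downarrow^{D_{st_o}}_{p,q}0$ on index sets $K_1,K_2$ of deferred density one. On $K:=K_1\cap K_2$ (still of deferred density one) the triangle inequality produces
$$
|x-y|\leq|x-x_{k_n}|+|x_{k_n}-y|\leq z_{k_n}^{(1)}+z_{k_n}^{(2)}.
$$
By Proposition 2.3(i) the right-hand sequence is deferred statistical order decreasing to $0$, so there is an index set $L$ of deferred density one along which $(z_n^{(1)}+z_n^{(2)})_{n\in L}$ is decreasing with infimum $0$. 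Intersecting once more, the set $M:=K\cap L$ has deferred density one, hence is infinite and cofinal in $L$.

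\textbf{Key subtle step.} The conclusion then follows from the observation that if a sequence is decreasing on $L$ with infimum $0$, any cofinal subset $M\subseteq L$ also has infimum $0$: any lower bound $u$ of $\{z_m+w_m:m\in M\}$ is automatically a lower bound of $\{z_l+w_l:l\in L\}$ by monotonicity (for each $l\in L$ pick $m\in M$ with $m\geq l$, then $z_m+w_m\leq z_l+w_l$). Thus $|x-y|$ is dominated by $z_m^{(1)}+z_m^{(2)}$ for all $m\in M$, so $|x-y|\leq \inf_{m\in M}(z_m^{(1)}+z_m^{(2)})=0$, forcing $x=y$. This cofinality/monotonicity interplay is the only genuine obstacle; the linearity half is a routine bookkeeping of index sets.
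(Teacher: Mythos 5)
Your proof is correct and follows essentially the same route as the paper: intersect the density-one index sets, apply the Riesz triangle inequality, and use Proposition 2.3 to see that the sum of the dominating sequences is again deferred statistically order decreasing to zero. The only differences are to your credit: you explicitly justify, via the cofinality/monotonicity observation, why the infimum along the intersected index set is still zero (a step the paper simply asserts as $(z_{j_n}+t_{j_n})_{j_n\in J}\downarrow 0$), and you also treat scalar multiples, which the paper's proof of linearity omits.
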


\begin{proof}
Assume that $x_n\dc x$ and $x_n\dc y$ hold in a Riesz space $E$. Then, there are sequences $z_n\downarrow^{D_{st_o}}0$ with $\delta_{p,q}(K)=1$ and $t_n\downarrow^{D_{st_o}}0$ with $\delta_{p,q}(M)=1$ such that $|x_{k_n}-x|\leq z_{k_n}$ and $|x_{m_n}-y|\leq t_{m_n}$ for all $k_n\in K$ and $m_n\in M$. Thus, it follows that
$$
|x-y|\leq |x-x_{j_n}|+|x_{j_n}-y|\leq z_{j_n}+t_{j_n}
$$
for every $j_n\in J:=K\cap M$. By using $(z_{j_n}+t_{j_n})_{j_n\in J}\downarrow0$, we obtain that $ |x-y|=0$. Thus, we get the equality of $x$ and $y$.

Now, for the linearity of the deferred statistical order limit, take sequences $x_n\dc x$ and $y_n\dc y$ in a Riesz space $E$. Then, there are sequences $z_n\downarrow^{D_{st_o}}0$ and $t_n\downarrow^{D_{st_o}}0$ such that $\delta_{p,q}(\{n\in \mathbb{N}:|x_n-x|\nleq z_n\})=0$ and $\delta_{p,q}(\{n\in \mathbb{N}:|y_n-y|\nleq t_n\})=0$. It follows from the triangular inequality in Riesz spaces that 
$$
\{n\in \mathbb{N}:|(x_n+y_n)-(x+y)|\nleq z_n+t_n\}\subseteq \{n\in \mathbb{N}:|x_n-x|\nleq z_n\}
$$
$$
\cup\{n\in \mathbb{N}:|y_n-y|\nleq t_n\}.
$$
Therefore, we obtain $\delta_{p,q}(\{n\in \mathbb{N}:|(x_n+y_n)-(x+y)|\nleq z_n+t_n\})=0$, i.e., we obtain $x_n+y_n\dc x+y$.
\end{proof}

In the following result, we observe some relations between deferred statistical order convergence and lattice properties.
\begin{theorem}
Let $x_n\dc x$ and $y_n\dc y$ in a Riesz space $E$. Then, we have the following statement:
\begin{enumerate}[(i)]
\item $x_n\vee y_n\dc x\vee y$;
\item $x_n\wedge y_n\dc x\wedge y$;
\item $x_n^+\dc x^+$;
\item $x_n^-\dc x^-$;
\item $|x_n|\dc |x|$.
\end{enumerate}
\end{theorem}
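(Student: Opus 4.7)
The plan is to reduce all five statements to the Riesz space identities
$$
\bigl|a\vee b - c\vee d\bigr| \le |a-c|+|b-d|, \qquad \bigl|a\wedge b - c\wedge d\bigr| \le |a-c|+|b-d|,
$$
which hold in any vector lattice, and then use the linearity of deferred statistical order decreasing sequences from Proposition~\ref{uo-conv implies stuo-conv.}(i).

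First I would establish (i). By hypothesis, there exist $z_n\downarrow^{D_{st_o}}_{p,q}0$ and $t_n\downarrow^{D_{st_o}}_{p,q}0$ together with index sets $K,M\subseteq\mathbb{N}$ of deferred density $1$ such that $|x_{k_n}-x|\le z_{k_n}$ for all $k_n\in K$ and $|y_{m_n}-y|\le t_{m_n}$ for all $m_n\in M$. A standard subadditivity argument shows $\delta_{p,q}(K\cap M)=1$ since $(K\cap M)^c = K^c\cup M^c$ and each complement has deferred density zero. On $K\cap M$, the first lattice inequality gives
$$
\bigl|x_n\vee y_n - x\vee y\bigr| \le |x_n-x|+|y_n-y| \le z_n + t_n.
$$
By Proposition~\ref{uo-conv implies stuo-conv.}(i), $z_n+t_n\downarrow^{D_{st_o}}_{p,q}0$, which together with the above inequality on a deferred density $1$ set gives $x_n\vee y_n\dc x\vee y$. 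The proof of (ii) is identical, replacing $\vee$ with $\wedge$ and invoking the second lattice inequality.

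Statements (iii)--(v) then follow by specialization. For (iii), apply (i) to the constant sequence $y_n\equiv 0$, which is trivially deferred statistical order convergent to $0$, to obtain $x_n^+ = x_n\vee 0 \dc x\vee 0 = x^+$. For (iv), apply (ii) to $(-x_n)\dc -x$ (Proposition~\ref{linear and unit} with $\lambda=-1$) and $y_n\equiv 0$, noting $x_n^- = (-x_n)\vee 0$. Alternatively, by linearity (Proposition~\ref{linear and unit}), $x_n^-$ is the deferred statistical order limit of $x_n^+ - x_n$, which equals $x^+ - x = x^-$. For (v), write $|x_n| = x_n\vee(-x_n)$ and apply (i) with the sequences $x_n\dc x$ and $-x_n\dc -x$ to conclude $|x_n|\dc x\vee(-x) = |x|$.

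There is essentially no obstacle here; the main thing to verify carefully is that the intersection of two sets of deferred density $1$ again has deferred density $1$, which is handled by the elementary union bound for the deferred density of complements. Everything else is a direct application of the vector lattice inequalities together with the already established linearity of deferred statistical order decreasing sequences.
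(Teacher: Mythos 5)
Your proof is correct and follows essentially the same route as the paper: establish (i) from the vector lattice inequality $\lvert a\vee b-c\vee d\rvert\le\lvert a-c\rvert+\lvert b-d\rvert$ on the intersection of the two deferred-density-one index sets (with $z_n+t_n$ deferred statistical order decreasing by the linearity proposition), and then obtain (ii)--(v) by lattice identities and linearity of the limit. The only cosmetic slip is in (iv), where $x_n^-=(-x_n)\vee 0$ calls for (i) rather than (ii), but your alternative derivation via $x^-=x^+-x$ already settles it.
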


\begin{proof}
It is enough to show the first statement because the other case can be obtained by applying \cite[Thm.1.7]{ABPO} and Proposition \ref{linear and unit}. Now, from $x_n\dc x$ and $y_n\dc y$, we have sequences $z_n\downarrow^{D_{st_o}}0$ and $t_n\downarrow^{D_{st_o}}0$ with indexes sets $\delta_{p,q}(K)=\delta_{p,q}(M)=1$ such that $|x_{k_n}-x|\leq z_{k_n}$ and $|y_{m_n}-y|\leq t_{m_n}$ hold for all $k_n\in K$ and $m_n\in M$. On the other hand, by applying \cite[Thm.1.9]{ABPO} and by taking $J:=N\cap M$, we get
$$
\lvert x_{j_n}\vee y_{j_n}-x\vee y\rvert\leq \lvert x_{j_n}-x\rvert+\lvert y_{j_n}-y\rvert\leq z_{j_n}+t_{j_n}
$$
for every $j_n\in J$. Hence, we obtain 
$$
\delta_{p,q}(\{n\in \mathbb{N}:\lvert x_{j_n}\vee y_{j_n}-x\vee y\rvert\nleq z_{j_n}+t_{j_n}\})=0.
$$
Therefore, we get the desired result, $x_n\vee y_n\dc x\vee y$.
\end{proof}

\begin{corollary}\label{corollary}
The positive cone $E_+=\{x\in E:0\leq x\}$ of a Riesz space $E$ is closed under the deferred statistical order convergence.
\end{corollary}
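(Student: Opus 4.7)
The plan is to deduce this directly from part (iv) of the preceding theorem together with the uniqueness of the deferred statistical order limit established in Proposition \ref{linear and unit}. Suppose $(x_n) \subset E_+$ and $x_n \dc x$ for some $x \in E$; the goal is to show $x \geq 0$.

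First I would observe that since $x_n \geq 0$ for every $n$, we have $x_n^- = (-x_n) \vee 0 = 0$ for every $n$. Therefore the sequence $(x_n^-)$ is identically zero, and in particular it is deferred statistical order convergent to $0$ (one may take the witness sequence $z_n \equiv 0$, which trivially satisfies $z_n \downarrow^{D_{st_o}}_{p,q} 0$ and $|x_n^- - 0| \leq z_n$ on all of $\mathbb{N}$, whose deferred density is $1$).

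Next I would apply part (iv) of the previous theorem to the hypothesis $x_n \dc x$, which yields $x_n^- \dc x^-$. Combining this with the conclusion of the previous paragraph, both $0$ and $x^-$ are deferred statistical order limits of the same sequence $(x_n^-)$. The uniqueness assertion of Proposition \ref{linear and unit} then forces $x^- = 0$, whence $x = x^+ - x^- = x^+ \geq 0$, so $x \in E_+$, as required. There is no real obstacle here: the corollary is essentially a repackaging of the lattice-compatibility established in the preceding theorem and the uniqueness of limits, and the only point worth noting is the trivial observation that the zero sequence qualifies as a deferred statistical order decreasing witness.
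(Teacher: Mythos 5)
Your argument is correct and is exactly the route the paper intends: the corollary is stated as an immediate consequence of the lattice-operations theorem (part (iv), or equivalently part (i) with $y_n\equiv 0$) combined with the uniqueness of the deferred statistical order limit from Proposition \ref{linear and unit}. Your observation that the zero sequence is a legitimate witness (constant sequences are decreasing with infimum $0$, and $K=\mathbb{N}$ has deferred density $1$) fills in the only detail the paper leaves implicit.
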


\begin{proposition}
If $x_n\dc x$, $y_n\dc y$ and $x_n\ge y_n$ satisfy for every $n\in\mathbb{N}$ in a Riesz space, then $x\ge y$ holds.
\end{proposition}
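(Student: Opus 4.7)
The plan is to reduce the statement to the already-established closedness of the positive cone under deferred statistical order convergence (Corollary \ref{corollary}), via the linearity of the limit (Proposition \ref{linear and unit}). Concretely, I would set $w_n := x_n - y_n$ for each $n \in \mathbb{N}$, and observe that the hypothesis $x_n \geq y_n$ simply says $w_n \in E_+$ for every $n$.

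Next I would invoke Proposition \ref{linear and unit}: since $x_n \dc x$ and $y_n \dc y$, the linearity of the deferred statistical order limit gives $w_n = x_n + (-1)\cdot y_n \dc x - y$. Now the whole sequence $(w_n)$ lies in the positive cone $E_+$, so \emph{a fortiori} the density-one subset witnessing the convergence $w_n \dc x - y$ does as well. Applying Corollary \ref{corollary} to $(w_n)$ yields $x - y \in E_+$, which is exactly the conclusion $x \geq y$.

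The argument has essentially no obstacle: the only thing worth checking is that the two earlier results are being applied correctly. For the linearity step, one uses both parts of Proposition \ref{linear and unit} (linear combinations preserve deferred statistical order convergence), and for the closedness step one only needs the pointwise inequality $w_n \geq 0$, which is strictly stronger than the density-one inequality that Corollary \ref{corollary} actually requires. So the proof collapses to a two-line derivation, and it would be natural to present it just that way rather than repeating the density-of-indices bookkeeping that already underlies Corollary \ref{corollary}.
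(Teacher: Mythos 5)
Your proposal is correct and is essentially the paper's own argument: the paper likewise passes to $x_n-y_n\in E_+$, uses linearity of the deferred statistical order limit to get $x_n-y_n\dc x-y$, and concludes $x-y\in E_+$ from Corollary \ref{corollary}. Making the appeal to Proposition \ref{linear and unit} explicit is a minor presentational difference, not a different route.
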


\begin{proof}
Assume that $y_n\leq x_n$ holds for each $n\in\mathbb{N}$. Then, we have $0\leq x_n-y_n \in E_+$ for each $n\in\mathbb{N}$. It follows from Corollary \ref{corollary} that $x_n-y_n\dc x-y\in E_+$ because of $(x_n-y_n)\in E_+$. Thus, we get $x-y\geq 0$, i.e., $x\geq y$.
\end{proof}

\begin{theorem}\label{motone and st implies order}
If $(x_n)$ is a monotone and deferred statistical order convergent in a Riesz space, then it is order convergent.
\end{theorem}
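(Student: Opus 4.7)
By a symmetric argument it suffices to treat the case when $(x_n)$ is decreasing; the increasing case follows by replacing $(x_n)$ with $(-x_n)$ and using Proposition \ref{linear and unit}. So assume $x_n\downarrow$ and $x_n\dc x$. The plan is to show that $x=\inf_{n}x_n$, which together with monotonicity forces $x_n\downarrow x$ and hence $x_n\co x$ (via the sequence $y_n:=x_n-x\downarrow 0$ dominating $|x_n-x|$).

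Unpacking the hypothesis via Definitions \ref{first def.} and \ref{main def}, I fix a sequence $z_n\downarrow^{D_{st_o}}_{p,q}0$ and an index set $K\subseteq\mathbb{N}$ with $\delta_{p,q}(K)=1$ and $|x_{k_n}-x|\leq z_{k_n}$ for every $k_n\in K$; further, Definition \ref{first def.} applied to $(z_n)$ yields a set $K_z\subseteq\mathbb{N}$ with $\delta_{p,q}(K_z)=1$ along which $(z_n)_{n\in K_z}$ is decreasing with infimum $0$. Put $J:=K\cap K_z$. Since the deferred density is finitely subadditive on complements, $\delta_{p,q}(J)=1$, so $J$ is infinite; moreover, as $J\subseteq K_z$ is an infinite subset of $K_z$ and $(z_n)_{n\in K_z}$ is decreasing to $0$, any cofinal piece $\{j\in J:j\geq n\}$ is still cofinal in $K_z$, so its $z$-values have infimum $0$.

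The key step is the propagation of bounds from $J$ to all indices via monotonicity. For any fixed $n\in\mathbb{N}$ and any $j\in J$ with $j\geq n$, the inequality $x-x_j\leq z_j$ together with $x_j\leq x_n$ (from $x_n\downarrow$) gives $x-x_n\leq z_j$; taking the infimum over $j\in J$ with $j\geq n$ yields $x-x_n\leq 0$, so $x$ is a lower bound of $(x_n)$. For the greatest-lower-bound property, if $u\leq x_n$ for every $n$, then in particular $u\leq x_j\leq x+z_j$ for each $j\in J$, so $u-x\leq z_j$ on $J$; passing to the infimum again gives $u\leq x$. Hence $x=\inf_n x_n$, so $x_n\downarrow x$, and this is precisely order convergence with $y_n:=x_n-x$.

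The main technical obstacle is the step isolated above: guaranteeing $\inf_{j\in J,\,j\geq n}z_j=0$ from the fact that $(z_n)_{n\in K_z}\downarrow 0$ only along $K_z$. This reduces to the observation that an infinite subset of $\mathbb{N}$ intersects $K_z$ cofinally, which is immediate from $\delta_{p,q}(J)=1$, but it is the one place where the interaction between the deferred-density structure on $J$ and the order-theoretic structure of $\inf$ in $E$ must be handled carefully.
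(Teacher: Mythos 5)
Your argument is correct, and it reaches the paper's goal ($x=\inf_n x_n$, hence $x_n\downarrow x$ and thus order convergence) by a genuinely more self-contained route. The paper does not unpack the dominating sequence at all: it fixes $k$, notes $x_k-x_n\geq 0$ for $n\geq k$, and invokes its earlier comparison machinery (linearity plus the closedness of the positive cone under $\dc$, i.e.\ the proposition that deferred statistical order limits preserve inequalities) to get $x_k\geq x$, and then applies the same tool to $x_n-z\geq 0$ for an arbitrary lower bound $z$ to get $x\geq z$. You instead work directly from Definitions \ref{first def.} and \ref{main def}: you intersect the density-one index set $K$ of the domination $|x_{k}-x|\leq z_{k}$ with the density-one set $K_z$ along which $(z_n)$ decreases to $0$, observe that the intersection $J$ is infinite (hence cofinal in $K_z$, so $\inf\{z_j:j\in J,\ j\geq n\}=0$), and use monotonicity of $(x_n)$ to propagate the bound $x-x_n\leq z_j$ from $j\in J$, $j\geq n$, to every index $n$, then repeat the infimum argument for an arbitrary lower bound $u$. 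What your version buys: it does not rely on the earlier propositions (beyond linearity for the reduction of the increasing case to the decreasing one), it makes explicit the cofinality/infimum mechanism that silently underlies those earlier results, and it avoids the small wrinkle in the paper's proof that the comparison proposition is stated for inequalities holding at every $n$ while $x_k-x_n\geq 0$ only holds for $n\geq k$ (harmless, since finite sets have deferred density zero, but left unaddressed there). What the paper's version buys is brevity, since the heavy lifting was already done in the preceding corollary and proposition. Two tiny points worth stating explicitly if you write this up in full: that $\delta_{p,q}(J)=1$ forces $J$ to be infinite uses $q_n\to\infty$ (finite sets have deferred density $0$), and that $z_j\geq 0$ for $j\in K_z$ (so $0$ really is the infimum of the cofinal subfamily) follows from $(z_j)_{j\in K_z}\downarrow 0$.
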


\begin{proof}
Suppose that $(x_n)\downarrow$ and $x_n\dc x$ in a Riesz space $E$. Fix any $k\in\mathbb{N}$. Then, we have $x_k-x_n\geq 0$ for all $n\geq k$. It follows that $x_k-x_n\dc x_k-x\geq 0$, i.e., $x_k\geq x$. Thus, $x$ is an lower bound of $(x_n)$ because $k$ is arbitrary. Choose another lower bound $z$ of $(x_n)$. Hence, we have $x_n-z\dc x-z\geq 0$, i.e., $x\geq z$. Therefore we get the desired result, $x_n\downarrow x$.
\end{proof}

\begin{remark}\label{ideal and vector lattice}
Let $A$ be an ideal in a vector lattice $E$ and $(a_n)$ be a sequence in $A$. One can observe that if $a_n\co 0$ in $A$, then $a_n\co 0$ in $E$. Hence, it is clear that $a_n\downarrow^{D_{st_o}} 0$ in $A$ implies $a_n\downarrow^{D_{st_o}} 0$ in $E$. For the converse, if $a_n\co 0$ in $E$ and order bounded, then $a_n\co 0$ in $A$, and so, $a_n\downarrow^{D_{st_o}} 0$ in $E$ implies $a_n\downarrow^{D_{st_o}} 0$ in $A$ for order bounded sequences.
\end{remark}

Thanks to Remark \ref{ideal and vector lattice}, we give the following two results.
\begin{theorem}
Let $A$ be an ideal in an $\sigma$-order complete vector lattice and $(x_n)$ be a sequence in $A$. Then, $x_n\dc 0$ in $A$ if and only if $x_n\dc 0$ in $E$.
\end{theorem}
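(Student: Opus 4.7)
The plan is to reduce the bi-implication to the analogous transfer result for the decreasing witnesses of Definition~\ref{main def} and then invoke Remark~\ref{ideal and vector lattice} in each direction.

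For the forward implication, I would start from $x_n\dc 0$ in $A$ and unwind Definition~\ref{main def} to produce a sequence $(z_n)\subset A$ with $z_n\downarrow^{D_{st_o}}_{p,q} 0$ in $A$, together with a set $K\subseteq\mathbb{N}$ of deferred density one on which $|x_{k_n}|\le z_{k_n}$. The easy half of Remark~\ref{ideal and vector lattice} immediately promotes $z_n\downarrow^{D_{st_o}}_{p,q} 0$ from $A$ to $E$, while the dominating inequality is insensitive to the ambient space, so the same pair $(z_n),K$ witnesses $x_n\dc 0$ in $E$.

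For the reverse implication, I would take witnesses $(z_n)\subset E$ and $K$ for $x_n\dc 0$ in $E$, together with a set $J$ of deferred density one along which $(z_{j_n})$ is decreasing in $E$ with infimum $0$; set $L:=K\cap J$, still of deferred density one. The crux is to replace the $E$-valued $(z_n)$ by a dominating sequence that actually lies inside $A$. My candidate is $\tilde z_n:=z_n\wedge(2|x_n|)$: it sits in $A$ because $2|x_n|\in A$ and $A$ is an ideal, and whenever $|x_n|\le z_n$ we get $|x_n|\le\tilde z_n\le z_n$, so the domination survives on $K$. Using the $\sigma$-order completeness of $E$, I would then pass to the tail suprema $v_n:=\sup_{m\ge n,\,m\in L}\tilde z_{l_m}$, which exist in $E$ because $\tilde z_{l_m}\le z_{l_m}\le z_{l_n}$; these $v_n$ form a decreasing sequence bounded above by $z_{l_n}\downarrow 0$ in $E$, and so $v_n\co 0$ in $E$. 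The order-bounded half of Remark~\ref{ideal and vector lattice} then transports this convergence back into $A$, giving a genuine $A$-valued decreasing witness for $x_n\dc 0$ in $A$.

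The main obstacle is precisely this last step of keeping the decreasing witness inside $A$: the converse half of Remark~\ref{ideal and vector lattice} requires order boundedness in $A$, which is not automatic from an $E$-witness. The ideal assumption (so that wedges with elements of $A$ remain in $A$) and the $\sigma$-order completeness of $E$ (so that the relevant tail suprema exist in $E$) both enter here, and dovetailing them to produce a bound that genuinely lives inside $A$ is the delicate part of the argument; once such a witness is secured, the definition of $\dc$ in $A$ is immediate.
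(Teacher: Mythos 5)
Your forward direction is exactly the paper's argument and is fine. The problem is the final step of your reverse direction. The tail suprema $v_n=\sup_{m\ge n}\tilde z_{l_m}$ are computed in $E$, and an ideal is not closed under countable suprema taken in $E$; nothing guarantees $v_n\in A$. Consequently the ``order-bounded half'' of Remark~\ref{ideal and vector lattice} cannot be invoked: that remark concerns sequences that already lie in $A$ and, in effect, needs an order bound belonging to $A$ with which the $E$-valued witness can be wedged. You have neither: $v_n$ need not belong to $A$, and the only available upper bound $z_{l_1}$ lives in $E$, not in $A$.

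Concretely, take $E=\mathbb{R}^{\mathbb{N}}$ with the coordinatewise order (it is $\sigma$-order complete), $A=c_{00}$ the ideal of finitely supported sequences, $p_n=0$, $q_n=n$, and $x_n=e_n$ the $n$-th unit vector. Letting $z_n\in E$ be the element whose $m$-th coordinate is $1$ for $m\ge n$ and $0$ otherwise, we have $z_n\downarrow 0$ in $E$ and $|x_n|\le z_n$ for all $n$, so $x_n\dc 0$ in $E$; your construction then yields $\tilde z_n=e_n$ and $v_n=z_n\notin A$. Moreover, no repair is possible here: if $w_n\downarrow^{D_{st_o}}_{p,q}0$ in $A$ along a deferred-density-one set $J$ and $|x_k|\le w_k$ on a deferred-density-one set $K$, then $w_{\min J}\ge w_k\ge e_k$ for the infinitely many $k\in K\cap J$, forcing $w_{\min J}$ to have infinitely many coordinates $\ge 1$, which is impossible in $c_{00}$. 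So the reverse implication genuinely fails without additional hypotheses (for instance, order boundedness of $(x_n)$ in $A$). You did isolate the real crux --- the paper's own converse silently applies Remark~\ref{ideal and vector lattice} to a witness $(z_n)$ that need not lie in $A$ --- but your wedge-and-tail-suprema device does not close that gap, because in this generality it cannot be closed.
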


\begin{proof}
Assume that $x_n\dc 0$ in $A$. Then, there exists a sequence $z_n\downarrow^{D_{st_o}}0$ in $A$ with index set $\delta_{p,q}(K)=1$ such that $|x_{k_n}|\leq z_{k_n}$ for all $k_n\in K$. Now, by using Remark \ref{ideal and vector lattice}, it follows from $(z_{k_n})_{k_n\in K}\downarrow0$ in $A$ that $(z_{k_n})_{k_n\in K}\downarrow0$ in $E$, i.e., we get $z_n\downarrow^{D_{st_o}}0$ in $E$ Therefore, we have $x_n\dc 0$ in $E$.

Conversely, assume $x_n\dc0$ in $E$. Then, there is a sequence $z_n\downarrow^{D_{st_o}}0$ in $E$ with index set $\delta_{p,q}(K)=1$ such that $|x_{k_n}|\leq z_{k_n}$ for all $k_n\in K$. Thus, Remark \ref{ideal and vector lattice} implies that $z_n\downarrow^{D_{st_o}}0$ in $A$. Therefore, we get $x_n\dc 0$ in $A$.
\end{proof}

The following result is similar to \cite[Thm.3.1.]{KDD}.
\begin{theorem}
Let $(x_n)$ be a sequence in a Riesz space $E$ and $(x_{k_n})_{k_n\in K}$ be a subsequence of $(x_n)$. If the limit
$$
\lim\inf\limits_{n\to\infty}\frac{1}{q_n-p_n}\big|\{p_n<k_n\leq q_n:k_n\in K\}\big|>0
$$
holds and $x_n\dc x$ for some sequences $p$ and $q$ satisfying the deferred property, then $x_{k_n}\dc x$. 
\end{theorem}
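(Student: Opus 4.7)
The plan is to reuse the very witnesses coming with the hypothesis $x_n \dc x$ and to verify that they also certify $x_{k_n}\dc x$; the positive lower deferred density of $K$ is precisely what guarantees that the ``bad'' set remains negligible \emph{after intersecting with $K$}.

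First I would unpack the hypothesis. Applying Definition \ref{main def} to $x_n \dc x$, extract a sequence $z_n\downarrow^{D_{st_o}}_{p,q}0$ and an index set $M\subseteq\mathbb{N}$ with $\delta_{p,q}(M)=1$ such that $|x_m-x|\leq z_m$ for every $m\in M$. Writing $B:=\mathbb{N}\setminus M$ one has $\delta_{p,q}(B)=0$, equivalently $\tfrac{1}{q_n-p_n}|B\cap(p_n,q_n]|\to 0$ as $n\to\infty$.

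The heart of the argument is a density comparison on the deferred windows. I would write
$$
\frac{|K\cap B\cap(p_n,q_n]|}{|K\cap(p_n,q_n]|}\;\leq\;\frac{\tfrac{1}{q_n-p_n}|B\cap(p_n,q_n]|}{\tfrac{1}{q_n-p_n}|K\cap(p_n,q_n]|},
$$
and observe that the numerator on the right tends to $0$ while the denominator has $\liminf>0$ by hypothesis; hence the ratio tends to $0$. This is precisely the statement that $K\cap M$ carries deferred density $1$ \emph{relative to} $K$. The same witness $(z_n)$ then certifies $x_{k_n}\dc x$: on every $k_n\in K\cap M$ the bound $|x_{k_n}-x|\leq z_{k_n}$ holds, and the deferred statistical order decreasing property of $(z_n)$ transports to the subsequence, since the density-$1$ monotone-to-zero index set for $(z_n)$ coming from Definition \ref{first def.}, when intersected with $K$, still has relative density $1$ in $K$ by exactly the same computation.

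The main obstacle I expect is conceptual rather than computational: to be rigorous one must pin down in what sense the conclusion $x_{k_n}\dc x$ is being claimed for the subsequence, namely with deferred density witnessed \emph{relative to} $K$ through the denominator $|K\cap(p_n,q_n]|$ rather than $q_n-p_n$. This is the only reading under which the positive-lower-density assumption on $K$ can play a nontrivial role and is not already absorbed by the stronger condition $\delta_{p,q}(K)=1$. Once this interpretation is fixed, the two density estimates above close the argument with no further Riesz space input beyond Definition \ref{first def.}.
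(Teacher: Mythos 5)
Your proposal is correct and follows essentially the same route as the paper: both reuse the witness sequence $(z_n)$ from $x_n\dc x$, observe that the bad indices inside each window $(p_n,q_n]$ for the subsequence are contained in the bad indices for the full sequence, and then divide by $|K\cap(p_n,q_n]|$ (the paper's $|H_n|$), using the positive lower deferred density of $K$ to force this relative frequency to zero. Your explicit remark that the conclusion must be read with the density taken relative to $K$ is exactly the normalization the paper's proof adopts, and your direct ratio estimate is just a more elementary phrasing of the paper's $\liminf\cdot\limsup$ inequality.
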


\begin{proof}
Assume that $x_n\dc x$ satisfies in $E$. Then, there is a sequence $z_n\downarrow^{D_{st_o}}0$ in $E$ such that $\delta_{p,q}(\{n\in\mathbb{N}:|x_n-x|\nleq z_n\})=0$. It can be seen that
$$
\{p_n<k_n\leq q_n:k_n\in K,\ |x_{k_n}-x|\nleq z_n\}\subseteq \{p_n<n\leq q_n:|x_n-x|\nleq z_n\}.
$$
Thus, by taking $H_n:=\{p_n<k_n\leq q_n:k_n\in K\}$ for all $n$, we have 
$$
\frac{1}{|H_n|}\big|\{p_n<k_n\leq q_n:k_n\in K,\ |x_{k_n}-x|\nleq z_n\}\big|
$$
$$
\leq\frac{1}{|H_n|}\{p_n<n\leq q_n:|x_n-x|\nleq z_n\}.
$$
Therefore, it is enough to show $\lim\sup\limits_{n\to\infty}\frac{1}{|H_n|}\big|\{p_n<n\leq q_n:|x_n-x|\nleq z_n\}\big|=0$ for proving the convergence $x_{k_n}\dc x$. We observe the following inequality
$$
\lim\inf\limits_{n\to\infty}\frac{|H_n|}{q_n-p_n}\lim\sup\limits_{n\to\infty}\frac{|\{p_n<n\leq q_n:|x_n-x|\nleq z_n\}|}{|H_n|}
$$
$$
\leq \lim\sup\limits_{n\to\infty}\frac{|\{p_n<n\leq q_n:|x_n-x|\nleq z_n\}|}{q_n-p_n}.
$$
Therefore, we have 
$$
\lim\sup\limits_{n\to\infty}\frac{1}{|H_n|}\big|\{p_n<n\leq q_n:|x_n-x|\nleq z_n\}\big|=0
$$
because of $x_n\dc x$. Thus, we obtain the desired result.
\end{proof}

In Remark \ref{order and deferred}, we give a relation between statistical  order and deferred statistical order convergences by taking $q(n)=n$ and $p(n)=0$. We give another relation under a new condition in the next theorem.
\begin{theorem}\label{bounded}
If the sequence $\big(\frac{p_n}{q_n-p_n}\big)$ is bounded for any $p$ and $q$ sequences having the deferred property, then the statistical order convergence implies the deferred statistical order convergence.
\end{theorem}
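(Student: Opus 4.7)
The plan is to unpack the definitions on both sides and reduce the entire statement to a single density comparison: under the boundedness hypothesis, any subset of $\mathbb{N}$ with natural density $1$ also has deferred density $1$. Once that set-theoretic fact is in hand, the order-theoretic ingredients transfer for free because the dominating sequence from the statistical setting can be reused unchanged in the deferred setting.

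So suppose $x_n \stc x$ in a Riesz space $E$. By definition, there is a sequence $z_n \sd 0$ and an index set $K \subseteq \mathbb{N}$ with $\delta(K) = 1$ such that $|x_{k_n} - x| \leq z_{k_n}$ for every $k_n \in K$. Unpacking $z_n \sd 0$, there is a further set $L \subseteq \mathbb{N}$ with $\delta(L) = 1$ and $(z_{l_n})_{l_n \in L} \downarrow 0$ in $E$. My goal will be to keep $(z_n)$ and the inclusion $|x_{k_n} - x| \leq z_{k_n}$ exactly as they are, but verify that $K$ and $L$ have deferred density $1$ as well; this will yield $z_n \downarrow^{D_{st_o}}_{p,q} 0$ and $x_n \dc x$ directly from Definition \ref{main def}.

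The core step is the following density comparison, which I will state and prove as the crux of the argument. \emph{Claim.} If $A \subseteq \mathbb{N}$ has $\delta(A) = 1$ and $(p_n/(q_n - p_n))$ is bounded, then $\delta_{p,q}(A) = 1$. For the proof I work with the complement $A^c$ and estimate
$$
\frac{|\{p_n < k \leq q_n : k \in A^c\}|}{q_n - p_n} \;\leq\; \frac{|\{k \leq q_n : k \in A^c\}|}{q_n - p_n} \;=\; \frac{|\{k \leq q_n : k \in A^c\}|}{q_n} \cdot \frac{q_n}{q_n - p_n}.
$$
The first factor tends to $0$ because $\delta(A^c) = 0$ and $q_n \to \infty$; the second factor equals $1 + p_n/(q_n - p_n)$, which is bounded by hypothesis. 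Hence the product tends to $0$, giving $\delta_{p,q}(A^c) = 0$, i.e., $\delta_{p,q}(A) = 1$.

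Applying the claim to $L$ gives $\delta_{p,q}(L) = 1$ together with $(z_{l_n})_{l_n \in L} \downarrow 0$, so $z_n \downarrow^{D_{st_o}}_{p,q} 0$. Applying it to $K$ gives $\delta_{p,q}(K) = 1$, and the inequality $|x_{k_n} - x| \leq z_{k_n}$ on $K$ is inherited from the hypothesis. By Definition \ref{main def}, $x_n \dc x$. The only place where anything could go wrong is the density comparison, and the obstacle there is purely arithmetic: making sure the factor $q_n/(q_n - p_n)$ is controlled, which is precisely what the boundedness of $p_n/(q_n - p_n)$ provides. Everything else is a matter of keeping track of the correct witness sequence and reusing it verbatim in both settings.
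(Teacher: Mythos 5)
Your proof is correct and follows essentially the same route as the paper: the key step in both is the inclusion of the deferred block $\{p_n<k\leq q_n\}$ in the initial segment $\{k\leq q_n\}$ combined with the identity $\frac{q_n}{q_n-p_n}=1+\frac{p_n}{q_n-p_n}$, whose boundedness controls the deferred density of the relevant sets. Your repackaging of this as a general density-transfer claim, applied separately to the index set of the domination and to the set on which $(z_n)$ decreases, is slightly more explicit than the paper's proof, which applies the estimate only to the exceptional set and leaves implicit that $z_n\sd 0$ yields deferred statistical order decreasing.
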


\begin{proof}
Assume that $x_n\soc x$ in a Riesz space $E$ and $\big(\frac{p_n}{q_n-p_n}\big)$ is a bounded sequence for some sequences $p$ and $q$ satisfying the deferred property. Thus, there exists a sequence $z_n\sd0$ such that  
$$
\lim\limits_{n\to\infty}\frac{1}{n}|\{k\leq n:|x_k-x|\nleq z_k\}|=0.
$$
It follows from the deferred properties of $(q_n)$ that we obtain
$$
\lim\limits_{n\to\infty}\frac{1}{q_n}|\{k\leq q_n:|x_k-x|\nleq z_k\}|=0.
$$
So, by the following inclusion 
$$
\{p_n< k\leq q_n:|x_k-x|\nleq z_k\}\subseteq \{k\leq q_n:|x_k-x|\nleq z_k\}, 
$$
we obtain
$$
\lim\limits_{n\to\infty}\frac{1}{q_n-p_n}|\{p_n< k\leq q_n:|x_k-x|\nleq z_k\}|
$$
$$\ \ \ \ \ \ \ \ \ \ \ \ \ \ \ \ \ \ \ \ \leq \lim\limits_{n\to\infty} \frac{1}{q_n}(1+\frac{p_n}{q_n-p_n})|\{k\leq q_n:|x_k-x|\nleq z_k\}|.
$$
Thus, we get the desired result, $x_n\dc x$.
\end{proof}

The converse of Theorem \ref{bounded} need not be true in general. To see this, we give the following example.
\begin{example}
Consider the Riesz space $E:=\mathbb{R}^2$ equipped with the coordinatewise ordering and a sequence $(x_n)$ in $E$ as follows:
$$
x_n:=
\begin{cases} 
(0,\frac{n+1}{2}), & \text{n is odd} \\
(0,-\frac{n}{2}), & \text{n is even}
\end{cases}
$$
for all $n$. Also, take $(q_n):=(2n)$ and $(p_n):=(4n)$. Then, it is clear that the assumption of Theorem \ref{bounded} is fulfilled, and also, $x_n\dc (0,0)$. But, it is not statistical order convergent.
\end{example}

\begin{theorem}
Let $p',q'$ and $p,q$ be pairs of sequences satisfying the deferred property such that $p_n\leq p'_n$ and $q_n'\leq q_n$ for each $n\in\mathbb{N}$, and $(x_n)$ be a sequence in a Riesz space $E$. Then, $x_n\xrightarrow[]{D_{st_o}(p',q')}x$ implies $x_n\dc x$ in $E$ whenever the sets $\{k:p_n<k\leq p'_n\}$ and $\{k:q'_n<k\leq q_n\}$ are finite for every $n\in\mathbb{N}$.
\end{theorem}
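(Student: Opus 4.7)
The plan is to use the same witness sequence $(z_n)$ for both convergences, exploiting the decomposition
\[
(p_n,q_n]=(p_n,p_n']\,\sqcup\,(p_n',q_n']\,\sqcup\,(q_n',q_n].
\]
The middle piece $(p_n',q_n']$ carries the information supplied by the hypothesis $x_n\xrightarrow[]{D_{st_o}(p',q')}x$, while the two boundary pieces have cardinalities $p_n'-p_n$ and $q_n-q_n'$, which are exactly the finite sets appearing in the hypothesis. Combined with $q_n-p_n\to\infty$ from the deferred property, the finiteness condition forces $\frac{p_n'-p_n}{q_n-p_n}\to 0$ and $\frac{q_n-q_n'}{q_n-p_n}\to 0$, so the boundary pieces contribute negligibly to the $(p,q)$-deferred density.

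First I would invoke the $(p',q')$-convergence to extract a sequence $z_n\downarrow^{D_{st_o}}_{p',q'}0$ and an index set $K$ with $\delta_{p',q'}(K)=1$ such that $|x_{k_n}-x|\le z_{k_n}$ on $K$; equivalently, by the remark following Definition~\ref{main def}, $\delta_{p',q'}(\{n:|x_n-x|\nleq z_n\})=0$. Using the decomposition above together with the inequality $q_n'-p_n'\le q_n-p_n$, I would bound
\[
\frac{|\{k\in(p_n,q_n]:|x_k-x|\nleq z_k\}|}{q_n-p_n}\le\frac{p_n'-p_n}{q_n-p_n}+\frac{|\{k\in(p_n',q_n']:|x_k-x|\nleq z_k\}|}{q_n'-p_n'}+\frac{q_n-q_n'}{q_n-p_n}.
\]
The middle term tends to zero by the hypothesis and the boundary terms tend to zero by the discussion above, giving $\delta_{p,q}(\{n:|x_n-x|\nleq z_n\})=0$.

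It remains to verify that the same $(z_n)$ is a valid witness for the $(p,q)$-convergence, i.e.\ that $z_n\downarrow^{D_{st_o}}_{p,q}0$. Taking the set $L\subset\mathbb N$ of $(p',q')$-deferred density one on which $(z_n)_{n\in L}\downarrow 0$ provided by $z_n\downarrow^{D_{st_o}}_{p',q'}0$, the identical three-piece estimate applied with $\mathbb N\setminus L$ in place of $\{n:|x_n-x|\nleq z_n\}$ yields $\delta_{p,q}(L)=1$, while the monotone decrease of $(z_n)_{n\in L}$ to zero is intrinsic to the set $L$ and hence preserved. Combining this with the density estimate of the preceding paragraph yields $x_n\dc x$. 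The main technical obstacle is precisely this transfer of the decreasing witness: one has to verify that the deferred density of $L$ remains equal to one under the new pair $(p,q)$, and without the boundary finiteness hypothesis this can fail, since even a bounded proportion of indices slipping in through the boundary pieces could destroy either the monotonicity of $(z_n)$ on a set of full $(p,q)$-density or the required density of the ``good'' set on which $|x_n-x|\le z_n$.
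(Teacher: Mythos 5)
Your proposal follows essentially the same route as the paper: the same single witness $(z_n)$ and the same decomposition of $(p_n,q_n]$ into $(p_n,p'_n]\cup(p'_n,q'_n]\cup(q'_n,q_n]$, with the middle piece controlled by the $(p',q')$-hypothesis and the boundary pieces dismissed as negligible. You are in fact more careful than the paper on the one point its proof silently skips: besides showing $\delta_{p,q}(\{n:|x_n-x|\nleq z_n\})=0$, you also transfer the witness itself, checking via the same three-piece estimate applied to $\mathbb{N}\setminus L$ that $z_n\downarrow^{D_{st_o}}_{p,q}0$, whereas the paper reuses the $(p',q')$-witness without comment. One caveat about your justification of the boundary terms: under this paper's definition, the deferred property only demands $p_n<q_n$ and $q_n\to\infty$, so $q_n-p_n\to\infty$ does not follow from it; moreover, for each fixed $n$ the sets $\{k:p_n<k\leq p'_n\}$ and $\{k:q'_n<k\leq q_n\}$ are automatically finite, so the finiteness hypothesis as literally stated cannot by itself force $(p'_n-p_n)/(q_n-p_n)\to0$ and $(q_n-q'_n)/(q_n-p_n)\to0$ (for instance $p_n=0$, $q_n=q'_n=n^2$, $p'_n=n^2-n$ satisfies every stated hypothesis, yet taking $x_k=0$ for $k\in\bigcup_m(m^2-m,m^2]$ and $x_k=1$ otherwise gives $x_n\xrightarrow[]{D_{st_o}(p',q')}0$ while the $(p,q)$-density of the bad set tends to $1$). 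To make the boundary terms vanish one must read the hypothesis as a uniform bound on the cardinalities $p'_n-p_n$ and $q_n-q'_n$ together with $q_n-p_n\to\infty$; this is clearly the intended reading, and the paper's own proof needs it just as much, since it merely asserts that the decomposition yields the limit zero without any estimate. So your argument is, if anything, the more rigorous of the two, but you should state these extra assumptions explicitly rather than attribute them to the deferred property.
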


\begin{proof}
Assume that $x_n\xrightarrow[]{D_{st_o}(p',q')}x$ holds in $E$. Then, there exists sequence $z_n\downarrow^{D_{st_o}}0$ such that
$$
\delta_{p,q}(\{n\in\mathbb{N}:|x_n-x|\nleq z_n\})=0.
$$
On the other hand, we have the following equality
$$
\{k:p_n<k\leq q_n, \ |x_n-x|\nleq z_n\}=\{k:p_n<k\leq p'_n, \ |x_n-x|\nleq z_n\}
$$
$$
\ \ \ \ \ \cup\{k:p'_n<k\leq q'_n, \ |x_n-x|\nleq z_n\} \cup\{k:q'_n<k\leq q_n, \ |x_n-x|\nleq z_n\}.
$$
It follows that
\begin{eqnarray*}
\lim\limits_{n\to\infty}\frac{1}{q_n-p_n}|\{k:p_n<k\leq q_n, \ |x_n-x|\nleq z_n\}|=0.
\end{eqnarray*}
Hence, we get $x_n\dc x$.
\end{proof}

\begin{corollary}
Let $p',q'$ and $p,q$ be pairs of sequences satisfying the deferred property such that $\lim\limits_{n\to\infty}\frac{q_n-p_n}{q'_n-p'_n}=t>0$, and $(x_n)$ be a sequence in a Riesz space $E$. Then, $x_n\xrightarrow[]{D_{st_o}(p',q')}x$ implies $x_n\dc x$ in $E$.
\end{corollary}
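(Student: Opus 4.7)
The plan is to obtain this corollary as a direct application of the preceding theorem, with the ratio condition $\lim_{n\to\infty}(q_n-p_n)/(q'_n-p'_n)=t>0$ substituting for the finite-overhang hypothesis. I would work under the containment $p_n\le p'_n$ and $q'_n\le q_n$ inherited from that theorem, so that $(p_n,q_n]$ splits disjointly as $(p_n,p'_n]\sqcup(p'_n,q'_n]\sqcup(q'_n,q_n]$.

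Starting from $x_n\xrightarrow{D_{st_o}(p',q')}x$, I would fix a witness $z_n\downarrow^{D_{st_o}}0$ with $\delta_{p',q'}(B)=0$ for $B:=\{n\in\mathbb{N}:|x_n-x|\nleq z_n\}$. Splitting the bad set along the decomposition and dividing by $q_n-p_n$ gives in particular
$$
\frac{|B\cap(p'_n,q'_n]|}{q_n-p_n}=\frac{q'_n-p'_n}{q_n-p_n}\cdot\frac{|B\cap(p'_n,q'_n]|}{q'_n-p'_n}\longrightarrow\frac{1}{t}\cdot 0=0
$$
by the ratio hypothesis, so the interior contribution to the $(p,q)$-deferred density of $B$ vanishes.

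The main obstacle is the two overhang contributions $|B\cap(p_n,p'_n]|/(q_n-p_n)$ and $|B\cap(q'_n,q_n]|/(q_n-p_n)$, which the ratio condition alone does not obviously render negligible since their joint weight is only bounded by $1-1/t$. To handle them I would invoke the preceding theorem: the ratio hypothesis places $(p,q)$ and $(p',q')$ in precisely the regime covered there, so that the overhang terms are absorbed into the vanishing deferred density via the argument of that theorem. Combining the three pieces yields $\delta_{p,q}(B)=0$, i.e., $x_n\dc x$, with the same witness $(z_n)$.
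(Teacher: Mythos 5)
The paper gives no proof of this corollary at all (it is stated as an immediate consequence of the preceding theorem), so your proposal has to stand on its own, and the step you yourself flag as "the main obstacle" is a genuine gap rather than a detail to be absorbed. Your middle-term estimate is correct: under the containment $p_n\le p'_n\le q'_n\le q_n$ (which, note, you have added — the corollary as stated does not assume it) one gets $|B\cap(p'_n,q'_n]|/(q_n-p_n)\to \frac1t\cdot 0=0$. But the two overhang terms cannot be "absorbed via the argument of that theorem": the preceding theorem kills the overhangs only because its hypothesis makes the sets $\{k:p_n<k\leq p'_n\}$ and $\{k:q'_n<k\leq q_n\}$ negligible, and the ratio hypothesis $\lim_n\frac{q_n-p_n}{q'_n-p'_n}=t$ does not imply that. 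When $t>1$ the overhangs occupy an asymptotic fraction $1-\frac1t>0$ of the window $(p_n,q_n]$, and the hypothesis $x_n\xrightarrow[]{D_{st_o}(p',q')}x$ gives no control of $x_k$ there, since the overhangs may be disjoint from every primed window. A second instance of the same problem, which you do not address, is that the witness $z_n\downarrow^{D_{st_o}}0$ is deferred statistically decreasing with respect to $(p',q')$, and transferring that property to the pair $(p,q)$ again requires controlling exactly these overhang regions.

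To see that the step genuinely fails (and hence that the statement itself is problematic in this regime), take $E=\mathbb{R}$, $p_n=p'_n=4^n$, $q'_n=4^n+n$, $q_n=4^n+2n$, so the containment holds and $t=2$, and let $x_k=1$ for $k\in B:=\bigcup_n\{4^n+n+1,\dots,4^n+2n\}$ and $x_k=0$ otherwise. Since $B$ misses every interval $(p'_n,q'_n]$, one has $x_n\xrightarrow[]{D_{st_o}(p',q')}0$ (take $z_n=1/n$ and $K=\mathbb{N}\setminus B$), yet $|B\cap(p_n,q_n]|/(q_n-p_n)=1/2$ for every $n$, and any set of $\delta_{p,q}$-density one meets $B$ in an infinite set on which a decreasing-to-zero dominating sequence would have to stay $\geq 1$; so $(x_n)$ is not deferred statistically order convergent to $0$ for $(p,q)$. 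Thus the corollary cannot be obtained by invoking the theorem as you propose: the argument works only when the overhang contribution is itself $o(q_n-p_n)$ (for instance $t=1$ under the containment, or under the negligibility hypothesis of the theorem), or under the reverse containment $p'_n\le p_n\le q_n\le q'_n$, in which case your middle-term computation alone already finishes the proof and no appeal to the theorem is needed.
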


Now, take the set $C^{p,q}_{(z_n)}:=\{(x_n):\exists x\in E,x_n\dc x\ \text{with} \ (z_n)\}$ for a fixed sequence $z_n\downarrow^{D_{st_o}}0$. Then, it is clear that $C^{p,q}_{(z_n)}\subseteq C^{p,q}_{(w_n)}$ whenever $z_n\leq w_n$ holds for all $n\in\mathbb{N}$.
\begin{proposition}
If $\delta_{p,q}(\{n\in\mathbb{N}:z_n\neq w_n\})=0$, then $C^{p,q}_{(z_n)}=C^{p,q}_{(w_n)}$.
\end{proposition}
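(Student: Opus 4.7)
The plan is to prove both inclusions $C^{p,q}_{(z_n)} \subseteq C^{p,q}_{(w_n)}$ and $C^{p,q}_{(w_n)} \subseteq C^{p,q}_{(z_n)}$ by a symmetric argument. The key observation is that the set $N := \{n \in \mathbb{N} : z_n = w_n\}$ has deferred density $1$ by hypothesis, and intersecting the witness index set from one convergence with $N$ produces a witness index set for the other. The underlying density arithmetic I will repeatedly use is the sub-additivity $\delta_{p,q}(A \cup B) \leq \delta_{p,q}(A) + \delta_{p,q}(B)$ applied to complements, which yields that the intersection of two deferred-density-$1$ sets again has deferred density $1$.

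First I would verify the preliminary point that $(w_n)$ is itself deferred statistical order decreasing to $0$, so that $C^{p,q}_{(w_n)}$ is a well-defined object. Since $z_n \downarrow^{D_{st_o}} 0$, there exists $L \subseteq \mathbb{N}$ with $\delta_{p,q}(L) = 1$ such that $(z_{l_n})_{l_n \in L} \downarrow 0$. Taking $L' := L \cap N$, the density arithmetic gives $\delta_{p,q}(L') = 1$, and $L'$ is cofinal in $L$ (otherwise a full tail of $L$ would lie in $N^c$, contradicting $\delta_{p,q}(N^c) = 0$). On $L'$ the two sequences agree, so $(w_{l'_n})_{l'_n \in L'} = (z_{l'_n})_{l'_n \in L'}$ is a cofinal subsequence of a sequence decreasing to zero on $L$, and hence $(w_{l'_n})_{l'_n \in L'} \downarrow 0$ in $E$. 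Consequently $w_n \downarrow^{D_{st_o}} 0$.

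Next, to prove $C^{p,q}_{(z_n)} \subseteq C^{p,q}_{(w_n)}$, take $(x_n) \in C^{p,q}_{(z_n)}$. By Definition \ref{main def} there exist $x \in E$ and $K \subseteq \mathbb{N}$ with $\delta_{p,q}(K) = 1$ and $|x_{k_n} - x| \leq z_{k_n}$ for every $k_n \in K$. Setting $K' := K \cap N$, the same density argument yields $\delta_{p,q}(K') = 1$. For every $k'_n \in K'$ we have $z_{k'_n} = w_{k'_n}$, so $|x_{k'_n} - x| \leq w_{k'_n}$, which establishes $x_n \dc x$ with the sequence $(w_n)$. The reverse inclusion follows by interchanging the roles of $(z_n)$ and $(w_n)$, since the hypothesis is symmetric in them.

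The main obstacle I anticipate is the preliminary step: the definition of the class $C^{p,q}_{(w_n)}$ tacitly requires $(w_n)$ to be deferred statistical order decreasing to $0$, so one cannot simply copy and paste the inequality $|x_{k_n}-x|\le w_{k_n}$ without first producing a legitimate decreasing subsequence of $(w_n)$ with infimum $0$ along a deferred-density-$1$ index set. Once the cofinality of $L'$ in $L$ is justified (which uses precisely that $\delta_{p,q}(N^c)=0$), the remainder is an exercise in intersecting deferred-density-$1$ sets.
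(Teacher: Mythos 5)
Your argument is correct and is essentially the paper's: where you intersect the deferred-density-one sets $K\cap N$ with $N=\{n:z_n=w_n\}$, the paper equivalently passes to complements and uses the inclusion $\{n:|x_n-x|\nleq w_n\}\subseteq\{n:|x_n-x|\nleq z_n\}\cup\{n:z_n\neq w_n\}$ of deferred-density-zero sets. Your preliminary step deriving $w_n\downarrow^{D_{st_o}}0$ from $z_n\downarrow^{D_{st_o}}0$ and the hypothesis is a sound addition; the paper simply takes both $(z_n)$ and $(w_n)$ to be deferred statistical order decreasing as part of the setup for the classes $C^{p,q}_{(\cdot)}$.
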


\begin{proof}
Suppose that $\delta_{p,q}(\{n\in\mathbb{N}:z_n\neq w_n\})=0$ holds for some sequences $z_n\downarrow^{D_{st_o}}0$ and $w_n\downarrow^{D_{st_o}}0$. Take $(x_n)\in C^{p,q}_{(z_n)}$. Then, we have $\delta_{p,q}(\{n:|x_n-x|\nleq z_n\})=1$. It follows from the following inclusion
\begin{eqnarray*}
\{n:|x_n-x|\nleq w_n\}\subseteq\{n:|x_n-x|\nleq z_n\}\cup\{n:z_n\neq w_n\}
\end{eqnarray*}
that we obtain $(x_n)\in C^{p,q}_{(w_n)}$, and so, $C^{p,q}_{(z_n)}\subseteq C^{p,q}_{(w_n)}$. Similarly, we can get $C^{p,q}_{(w_n)}\subseteq C^{p,q}_{(z_n)}$. Therefore, we obtain $C^{p,q}_{(z_n)}=C^{p,q}_{(w_n)}$.
\end{proof}

It is clear that $C^{p,q}_{(z_{k_n})}\subseteq C^{p,q}_{(z_n)}$ for any subsequence $z_{k_n}\downarrow^{D_{st_o}}0$ of sequence $z_n\downarrow^{D_{st_o}}0$. For the converse, we give the following result.
\begin{proposition}
If $z_n\downarrow 0$ holds, then $C^{p,q}_{(z_n)}\subseteq C^{p,q}_{(z_{k_n})}$ satisfies for each subsequence $(z_{k_n})_{k_n\in K}$ of $(z_n)$ with $\delta_{p,q}(K)=1$.
\end{proposition}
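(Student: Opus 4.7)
My plan is to use a simple bookkeeping argument: intersect the witnessing index set from $C^{p,q}_{(z_n)}$ with $K$, and exploit the fact that the intersection of two sets of deferred density one is again of deferred density one. The hypothesis $z_n\downarrow 0$ will be used only to ensure that the would-be dominating sequence $(z_{k_n})_{k_n\in K}$ itself falls under Definition \ref{main def}, so that membership in $C^{p,q}_{(z_{k_n})}$ is meaningful.

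Starting from an arbitrary $(x_n)\in C^{p,q}_{(z_n)}$, Definition \ref{main def} supplies some $x\in E$ and a set $M\subseteq\mathbb{N}$ with $\delta_{p,q}(M)=1$ such that $|x_m-x|\leq z_m$ for every $m\in M$. I would set $L:=M\cap K$ and note that $L^{c}\subseteq M^{c}\cup K^{c}$; since both complements have deferred density zero, subadditivity of the deferred density (the same step already used in the proof of Proposition \ref{linear and unit}) gives $\delta_{p,q}(L)=1$. For each $l\in L\subseteq M$ the inequality $|x_l-x|\leq z_l$ is retained, and $l\in K$ identifies $z_l$ as an entry of the subsequence $(z_{k_n})_{k_n\in K}$; this is precisely the witness required for $(x_n)\in C^{p,q}_{(z_{k_n})}$.

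It remains to verify that $(z_{k_n})_{k_n\in K}$ is itself deferred statistical order decreasing to $0$, so that Definition \ref{main def} actually applies to it. This is the single place where the hypothesis $z_n\downarrow 0$ is spent: plain monotone order decrease to $0$ transfers verbatim to every subsequence, so $(z_{k_n})\downarrow 0$ in the order sense and hence $z_{k_n}\downarrow^{D_{st_o}}_{p,q}0$. The only real obstacle is interpretive rather than technical, namely fixing the convention that witnesses for membership in $C^{p,q}_{(z_{k_n})}$ are required to live inside $K$; once that is agreed, the whole proof reduces to the density-closure computation outlined above and nothing deeper is needed.
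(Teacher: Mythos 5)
Your proof is correct and takes essentially the same route as the paper's: extract the witness set $M$ for $(x_n)\in C^{p,q}_{(z_n)}$, intersect it with $K$ (the paper's $J:=M\cap K$), and note that the domination inequality $|x_j-x|\leq z_j$ survives on this intersection of deferred density one, whose entries are terms of $(z_{k_n})$. Your additional remarks --- the explicit subadditivity argument for $\delta_{p,q}(M\cap K)=1$ and the check that $z_n\downarrow 0$ forces $(z_{k_n})\downarrow 0$, hence $z_{k_n}\downarrow^{D_{st_o}}_{p,q}0$ --- merely spell out details the paper leaves implicit.
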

\begin{proof}
Assume that $z_n\downarrow 0$ and $(z_{k_n})_{k_n\in K}$ be a subsequence of $(z_n)$ with $\delta_{p,q}(K)=1$. Take any element $(x_n)\in C^{p,q}_{(z_n)}$. Then, we have $x\in E$ and an index set $\delta_{p,q}(M)=1$ such that $|x_{m_n}-x|\leq z_{m_n}$ holds for all $m_n\in M$. On the other hand, by taking $J:=M\cap K$, we have $|x_{j_n}-x|\leq z_{j_n}$ for each $j_n\in J$. Since $(z_{j_n})$ is a subsequence of $(z_{k_n})$, we obtain $(x_n)\in C^{p,q}_{(z_{k_n})}$.
\end{proof}
{\tiny 

}
\end{document}